\newtheorem{thm}{Theorem}
\newtheorem{lem}[thm]{Lemma}
\newtheorem{prop}[thm]{Proposition}
\theoremstyle{definition}
\newtheorem{defn}[thm]{Definition}
\newtheorem{rmk}[thm]{Remark}
\newtheorem{exm}[thm]{Example}
\newcommand{\CPb}{\overline{\mathbb{CP}}{}^{2}}
\newcommand{\CP}{{\mathbb{CP}}{}^{2}}
\newcommand{\R}{\mathbb{R}}
\newcommand{\scparallel}{{\scriptscriptstyle \parallel}}
\title[Note on new symplectic $4$-manifolds nonnegative signature ]
{Note on new symplectic $4$-manifolds with nonnegative signature \\} 
\begin{document}

\author{Anar Akhmedov}
\address{School of Mathematics, 
University of Minnesota, 
Minneapolis, MN, 55455, USA}
\email{akhmedov@math.umn.edu}

\date{November 12, 2011. Revised on April 2012}

\subjclass[2000]{Primary 57R55; Secondary 57R17}

\begin{abstract} In this short note, we present a construction of new symplectic $4$-manifolds with non-negative signature using the complex surfaces on Bogomolov-Miyaoka-Yau line $c_1^2 = 9\chi_h$, the fake projective planes and Cartwright-Steger surfaces. Our construction yields an infinite family of fake rational homology $(2n-1)\CP\#(2n-1)\CPb$ for any integer $3 \leq n \leq 22$. 
\end{abstract}

\maketitle

\section{Introduction}

This paper is the continuation of author's previous work on the geography of symplectic $4$-manifolds (\cite{A1}, \cite{akhmedov}, \cite{AP1}, \cite{ABBKP}, \cite{AP2}, \cite{AP3}, \cite{AP4}, \cite{AGP}). The purpose of this article is to construct new symplectic $4$-manifolds that are interesting with respect to the symplectic geography problem. Starting from the fake projective planes and Cartwright-Steger surfaces on Bogomolov-Miyaoka-Yau line $c_1^2 = 9\chi_h$, forming their symplectic sum with the product manifolds $\Sigma_{g} \times \Sigma_{h}$, and applying the sequence of Luttinger surgeries along the lagrangian tori, we obtain a family of symplectic $4$-manifolds with non-negative signature and with trivial first rational homology group. As a consequence of our technique, we produce an infinite family of fake symplectic and an infinite family of fake nonsymplectic rational homology $(2n-1)\CP\#(2n-1)\CPb$ for any integer $n\geq 3$. These examples are most interesting when $3 \leq n \leq 22$. We hope that the approach presented here is promising in constructing the exotic smooth structures on $4$-manifolds with nonnegative signature. In fact, there are strong reasons to expect that some of the manifolds produced in Theorem~\ref{thm:main} can be made simply connected by choosing the gluing diffeomorphism of the fiber sums carefully. Another potential application would be to construct the fake symplectic $\CP$'s using the fake projective planes and Cartwright-Steger surface with $c_1^2 = 9\chi_h = 9$. We will return to these points and other applications of these building blocks in a future work. The results and ideas outlined in this article were known to the author for some time, and has been communicated with some of his colleagues.   

Given two $4$-manifolds, $X$\/ and $Y$, we denote their connected sum by $X\# Y$. For a positive integer $m\geq 2$, the connected sum of $m$\/ copies of $X$\/ will be denoted by $mX$\/. Let $\CP$ denote the complex projective plane and let $\CPb$ denote the underlying smooth $4$-manifold $\CP$ equipped with the opposite orientation. Our main results are the following.

\begin{thm}\label{thm:main}
Let $M$ be one of the following\/ $4$-manifolds.  
\begin{itemize}
%
\item[(i)] $5\CP\#5\CPb$,

\vspace{3pt}
\item[(ii)] $(2n-1)\CP\#(2n-1)\CPb$ for any integer $4 \leq n \leq 22$.

\end{itemize}
Then there exist an infinite family of irreducible symplectic\/ and an infinite family of irreducible non-symplectic\/ $4$-manifolds, all of which have the rational homology of $M$. Furthermore, our examples have odd intersection form if $n$ is odd.

\end{thm}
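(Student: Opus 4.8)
The plan is to realize each $M$ as the rational homology type of a minimal symplectic $4$-manifold assembled from the building blocks on the Bogomolov--Miyaoka--Yau line, and then to spread this single manifold into two infinite families by a knot-surgery operation. First I would fix a fake projective plane or a Cartwright--Steger surface $B$; being a minimal complex surface of general type it is symplectic, and it carries smoothly embedded symplectic curves suitable for fiber summing. I would then take a product $\Sigma_g\times\Sigma_h$, which is symplectic of signature zero and contains the obvious surfaces $\Sigma_g\times\{\mathrm{pt}\}$ and $\{\mathrm{pt}\}\times\Sigma_h$ of square zero, and form the Gompf symplectic sum of $B$ (or of several such blocks) with copies of $\Sigma_g\times\Sigma_h$ along embedded symplectic surfaces of equal genus and opposite self-intersection. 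The number and type of blocks and the genera $g,h$ are to be chosen so that the additive invariants match $M$: the Euler characteristic combines as $\chi(X\#_\Sigma Y)=\chi(X)+\chi(Y)-2\chi(\Sigma)$ and, by Novikov additivity, the signature combines as $\sigma(X\#_\Sigma Y)=\sigma(X)+\sigma(Y)$; both are unchanged by the subsequent surgeries. Calibrating these two equations against the pair $(\chi(M),\sigma(M))$ is what pins the construction down.

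The crux is the fundamental-group and first-homology reduction. In each product factor $\Sigma_g\times\Sigma_h$ there is a standard collection of pairwise disjoint Lagrangian tori built from loops generating $H_1(\Sigma_g)$ and $H_1(\Sigma_h)$, and I would perform a sequence of Luttinger surgeries along them. By the theorem of Auroux--Donaldson--Katzarkov each such surgery keeps the manifold symplectic while introducing a relation that kills a chosen generator of $\pi_1$, so a careful bookkeeping of which loops survive the fiber sum allows me to drive $b_1$ to zero and thereby trivialize the first rational homology. The main obstacle is exactly this: arranging the surgery tori to be simultaneously disjoint, Lagrangian with the correct framings, and \emph{sufficient} to annihilate $H_1(\,\cdot\,;\mathbb{Q})$ without disturbing the intended second Betti number, so that the resulting closed symplectic manifold $Z$ has precisely the rational homology of $M$. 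At this stage I would also determine the integral intersection form: when $n$ is odd I expect to exhibit a smoothly embedded surface of odd self-intersection descending from one of the building blocks, which certifies that the form of $Z$ is odd.

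It remains to prove irreducibility and to manufacture the two families. Minimality of $Z$ should follow from Usher's theorem on minimality of symplectic sums, since the summands are relatively minimal and Luttinger surgery preserves minimality; as $b^+(Z)=2n-1>1$, the nonvanishing of the Seiberg--Witten invariant of a minimal symplectic manifold (Taubes) forces $Z$ to be irreducible, hence not diffeomorphic to the reducible manifold $M$. Finally I would locate a homologically essential torus $T\subset Z$ of square zero, disjoint from the surgery tori and positioned so that Fintushel--Stern knot surgery along $T$ leaves the rational homology unchanged. The knot-surgery formula $SW_{Z_K}=SW_Z\cdot\Delta_K(t)$, together with $SW_Z\neq 0$, shows that knots with distinct Alexander polynomials yield pairwise non-diffeomorphic manifolds: fibered knots produce an infinite family of irreducible symplectic manifolds, while non-fibered knots with $\Delta_K\neq 1$ produce an infinite family of irreducible non-symplectic manifolds with nonvanishing Seiberg--Witten invariant, every member having the rational homology of $M$.
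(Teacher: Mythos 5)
Your construction phase tracks the paper closely: the paper takes $\Sigma_3\times\mathbb{T}^2$, performs six torus surgeries on Lagrangian tori, and forms the Gompf sum with (copies of) Mumford's fake projective plane blown up once, glued along genus-$3$ surfaces of square zero; Euler characteristic and Novikov additivity, Seifert--Van Kampen to kill $b_1$, Usher's theorem for minimality, and a $(-1)$-torus (exceptional sphere sewn to a square-zero torus) certifying the odd intersection form all appear exactly as you propose. One substantive point you gloss over, but which the paper must supply, is \emph{where} the fiber-sum surface in the fake projective plane comes from: it uses the divisibility $K_M=3L$, represents $L$ by a symplectic surface of square $1$ whose genus is $3$ by adjunction, and blows up once to get $\bar{\Sigma}_3$ of square $0$.

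The genuine gap is in how you manufacture the two infinite families. You invoke Fintushel--Stern knot surgery, which requires a \emph{homologically essential} square-zero torus $T$ (symplectically embedded, if fibered knots are to yield symplectic manifolds, and with hypotheses under which the product formula $SW_{Z_K}=SW_Z\cdot\Delta_K(t^2)$ is actually proven). You never exhibit such a torus, and in this construction it is exactly what you should not expect to find: the whole point of the Luttinger-surgery step is that the natural tori in $\Sigma_g\times\Sigma_h$ become \emph{null-homologous} after surgery --- that is what drives $b_1$ to zero --- so the obvious candidates all have $[T]=0$, where the knot surgery formula degenerates. The paper avoids this entirely by the reverse-engineering device of Fintushel--Park--Stern: one of the six torus surgeries is performed with coefficient $m/q$ rather than $\pm 1$, so the infinite non-symplectic family comes from varying $m$ (the Seiberg--Witten invariants, computed via the Morgan--Mrowka--Szab\'o formula for surgeries on null-homologous tori, grow with $m$ and exceed the Taubes bound $\pm 1$ for large $m$), while the infinite symplectic family comes from setting $m=1$ and varying $p$, which is detected by $p$-torsion in $H_1$. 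A secondary error: even granting your torus, ``non-fibered knots with $\Delta_K\neq 1$'' do not suffice to produce non-symplectic manifolds --- there exist non-fibered knots with monic Alexander polynomial, for which the Taubes obstruction is silent; you need knots with \emph{non-monic} $\Delta_K$.
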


Recall that exotic irreducible smooth structures on $(2n-1)\CP\#(2n-1)\CPb$ for $n\geq 23$ were constructed in \cite{AP3}. Although our construction holds for an arbitrary $n \geq 4$, we only state it in the case $3 \leq n \leq 22$, when it is most interesting.

Our paper is organized as follows. In Sections~\ref{sec:Luttinger}--\ref{sec: model}, we state some background results needed in this paper and collect building blocks that are needed in our construction of symplectic $4$-manifolds.
In Sections~\ref{sec:fake}, \ref{sec:faken}, we present proof of our main results. In Sections~\ref{sec:fake3}, we present new construction of fake symplectic $3(\mathbb{S}^2\times \mathbb{S}^2)$ using Mumford's fake projective plane.

\section{Luttinger surgery and symplectic cohomology $(2n-3)(\mathbb{S}^2\times \mathbb{S}^2)$}

\label{sec:Luttinger}

In the section we will briefly review a Luttinger surgery. For the details, we refer the reader to \cite{lu} and \cite{ADK}. Luttinger surgery has been very effective tool recently for constructing exotic smooth structures on $4$-manifolds.

\begin{defn} Let $X$\/ be a symplectic $4$-manifold with a symplectic form $\omega$, and the torus $\Lambda$ be a Lagrangian submanifold of $X$ with self-intersection $0$. Given a simple loop $\lambda$ on $\Lambda$, let $\lambda'$ be a simple loop on $\partial(\nu\Lambda)$ that is parallel to $\lambda$ under the Lagrangian framing. For any integer $k$, the $(\Lambda,\lambda,1/k)$ \emph{Luttinger surgery}\/ on $X$\/ will be 
$X_{\Lambda,\lambda}(1/k) = ( X - \nu(\Lambda) ) \cup_{\phi} (S^1 \times S^1 \times D^2)$,  the $1/k$\/ surgery on $\Lambda$ with respect to $\lambda$ under the Lagrangian framing. Here 
$\phi : S^1 \times S^1 \times \partial D^2 \to \partial(X - \nu(\Lambda))$ denotes a gluing map satisfying $\phi([\partial D^2]) = k[{\lambda'}] + [\mu_{\Lambda}]$ in $H_{1}(\partial(X - \nu(\Lambda))$, where $\mu_{\Lambda}$ is a meridian of $\Lambda$.

\end{defn}

It is  shown in \cite{ADK} that $X_{\Lambda,\lambda}(1/k)$ possesses a symplectic form that restricts to the original symplectic form $\omega$ on $X\setminus\nu\Lambda$. The following lemma is easy to verify and the proof is left as an exercise to the reader.

\begin{lem} $\pi_1(X_{\Lambda,\lambda}(1/k)) = \pi_1(X- \Lambda)/N(\mu_{\Lambda} \lambda'^k)$.
\smallskip
\item $\sigma(X)=\sigma(X_{\Lambda,\lambda}(1/k))$ and $e(X)=e(X_{\Lambda,\lambda}(1/k))$.
\end{lem}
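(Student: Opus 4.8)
The plan is to treat the two assertions separately: the first is a Seifert--van Kampen computation, while the second follows from the purely local nature of the surgery together with standard additivity principles.

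For the fundamental group, I would apply the Seifert--van Kampen theorem to the decomposition $X_{\Lambda,\lambda}(1/k) = (X - \nu\Lambda) \cup_\phi (S^1 \times S^1 \times D^2)$, whose two pieces meet along the $3$-torus $T^3$, identified by $\phi$ with $\partial(X-\nu\Lambda)$ and with $S^1\times S^1\times\partial D^2$. Since $T^3$ is connected, van Kampen applies with no basepoint subtleties. The glued piece $S^1 \times S^1 \times D^2$ deformation retracts onto $S^1 \times S^1$, so its fundamental group is $\mathbb{Z}^2$, generated by the two circle factors, and $[\partial D^2]$ is nullhomotopic there. First I would check that under $\phi$ the two circle generators are carried to boundary-parallel push-offs of the generators of $\pi_1(\Lambda)=\mathbb{Z}^2$, which already lie in the image of $\pi_1(T^3)\to\pi_1(X-\nu\Lambda)$; hence they contribute no new generators. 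The only genuinely new relation arises because $[\partial D^2]$ bounds a disk in the glued piece while $\phi([\partial D^2]) = k[\lambda'] + [\mu_\Lambda]$ on the other side, forcing $\mu_\Lambda \lambda'^k = 1$. Using $\pi_1(X-\nu\Lambda) = \pi_1(X-\Lambda)$, van Kampen would then deliver exactly $\pi_1(X_{\Lambda,\lambda}(1/k)) = \pi_1(X-\Lambda)/N(\mu_\Lambda \lambda'^k)$.

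For the Euler characteristic and signature, I would exploit the fact that the surgery alters $X$ only inside $\nu\Lambda$: it excises $\nu\Lambda \cong T^2\times D^2$ and reglues $S^1\times S^1\times D^2 \cong T^2\times D^2$ along the common boundary $T^3$. Both pieces are diffeomorphic to $T^2\times D^2$, with $e(T^2\times D^2)=e(T^2)\,e(D^2)=0$ and $e(T^3)=0$. The inclusion--exclusion formula $e(A\cup B)=e(A)+e(B)-e(A\cap B)$ then gives $e(X)=e(X-\nu\Lambda)=e(X_{\Lambda,\lambda}(1/k))$. For the signature I would invoke Novikov additivity along the separating hypersurface $T^3$, so that $\sigma(X)=\sigma(X-\nu\Lambda)+\sigma(\nu\Lambda)$ and likewise for the surgered manifold. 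Since $H_2(T^2\times D^2)$ is generated by the class of $T^2\times\{0\}$, which has trivial self-intersection, the intersection form of $T^2\times D^2$ vanishes and $\sigma(T^2\times D^2)=0$; the two additivity formulas coincide, yielding $\sigma(X)=\sigma(X_{\Lambda,\lambda}(1/k))$.

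The hard part, and the place demanding genuine care, is the van Kampen bookkeeping in the first assertion: I must confirm that $\phi$ really sends both circle generators of the glued solid piece into the subgroup already generated by $\pi_1(T^3)$ inside $\pi_1(X-\nu\Lambda)$, so that no spurious generator survives and the disk-bounding relation $\mu_\Lambda\lambda'^k$ is the only one added. A secondary subtlety is the precise definition of the signature for the manifolds-with-boundary appearing in Novikov additivity, but since every relevant boundary piece is $T^2\times D^2$ with vanishing intersection form this presents no real difficulty, and the Euler characteristic claim is entirely routine.
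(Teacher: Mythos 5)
Your proposal is correct. Note that the paper itself offers no proof of this lemma --- it is explicitly ``left as an exercise to the reader'' --- so there is nothing to compare against; your argument (Seifert--van Kampen on the decomposition $(X-\nu\Lambda)\cup_\phi(S^1\times S^1\times D^2)$, inclusion--exclusion for $e$, and Novikov additivity along $T^3$ for $\sigma$) is exactly the standard verification the author had in mind.

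One small remark: the step you flag as the ``hard part'' is actually immediate, and in a slightly different way than you phrase it. You do not need the circle generators of the glued-in $S^1\times S^1\times D^2$ to map to boundary-parallel push-offs of the generators of $\pi_1(\Lambda)$ (the definition of $\phi$ only constrains $[\partial D^2]$, so this need not hold); all that matters is that $\phi$ carries them into $\partial(X-\nu\Lambda)=T^3$, so their images automatically lie in the image of $\pi_1(T^3)\to\pi_1(X-\nu\Lambda)$ and contribute no new generators. Likewise the commutation relation $[\bar x,\bar y]=1$ in $\pi_1(S^1\times S^1\times D^2)\cong\Z^2$ imposes nothing new, since the corresponding elements already commute in $\pi_1(T^3)$ and hence their images commute in $\pi_1(X-\nu\Lambda)$. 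With those two observations the van Kampen bookkeeping closes up and the only surviving relation is $\mu_\Lambda\lambda'^k=1$, as you state.
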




\subsection{Construction of cohomology $(2n-3)(\mathbb{S}^2\times \mathbb{S}^2)$}
\label{sec:(2n-3)(S^2 x S^2)}

In this section, we recall the family of symplectic cohomology $(2n-3)(\mathbb{S}^2\times \mathbb{S}^2)$ constructed in \cite{FPS, AP1}, respectively for $n=2$, and $n \geq 3$. We also refer the reader to \cite{A1} where such family of 
symplectic $4$-manifolds were initially studied by the author using the knot surgery and the twisted fiber sum. 

Recall that for each integer $n\geq2$, there is family of irreducible pairwise non-diffeomorphic 4-manifolds $\{Y_n(m)\mid m=1,2,3,\dots\}$ that have the same integer cohomology ring as $(2n-3)(\mathbb{S}^2\times \mathbb{S}^2)$. $Y_n(m)$ are obtained by performing $2n+3$ Luttinger surgeries (cf.\ \cite{ADK, lu}) and a single $m$\/ torus surgery on $\Sigma_2\times \Sigma_n$. These $2n+4$ torus surgeries are performed as follows

\begin{eqnarray}\label{first 8 Luttinger surgeries}
&&(a_1' \times c_1', a_1', -1), \ \ (b_1' \times c_1'', b_1', -1), \ \
(a_2' \times c_2', a_2', -1), \ \ (b_2' \times c_2'', b_2', -1),\\ \nonumber
&&(a_2' \times c_1', c_1', +1), \ \ (a_2'' \times d_1', d_1', +1),\ \
(a_1' \times c_2', c_2', +1), \ \ (a_1'' \times d_2', d_2', +m),
\end{eqnarray}
together with the following $2(n-2)$ additional Luttinger surgeries
\begin{gather*}
(b_1'\times c_3', c_3',  -1), \ \ 
(b_2'\times d_3', d_3', -1), \  \dots  ,\ 
(b_1'\times c_n', c_n',  -1), \ \
(b_2'\times d_n', d_n', -1).
\end{gather*}
Here, $a_i,b_i$ ($i=1,2$) and $c_j,d_j$ ($j=1,\dots,n$) denote the standard loops that generate $\pi_1(\Sigma_2)$ and $\pi_1(\Sigma_n)$, respectively. See Figure~\ref{fig:lagrangian-pair} for a typical Lagrangian tori along which the surgeries are performed.  

\begin{figure}[ht]
\begin{center}
\includegraphics[scale=.49]{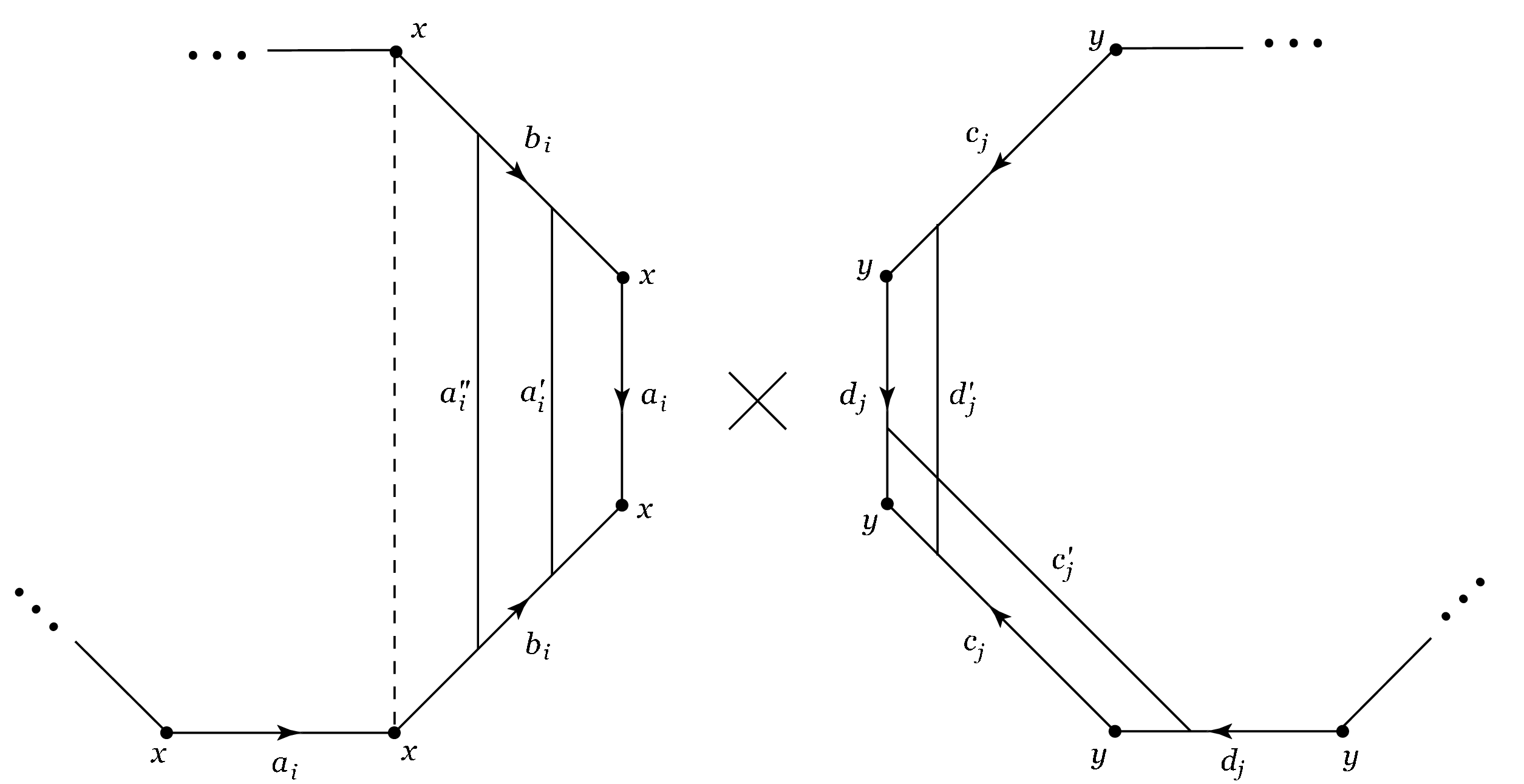}
\caption{Lagrangian tori $a_i'\times c_j'$ and $a_i''\times d_j'$}
\label{fig:lagrangian-pair}
\end{center}
\end{figure}

Since $m$-torus surgery is non-symplectic for $m\geq 2$, the manifold $Y_n(m)$ is symplectic only when $m=1$. The Euler characteristic of $Y_n(m)$ is $4n-4$ and its signature is $0$.  $\pi_1(Y_n(m))$ is generated by $a_i,b_i,c_j,d_j$ ($i=1,2$ and $j=1,\dots,n$) and the following relations hold in $\pi_1(Y_n(m))$:  
\begin{gather}\label{Luttinger relations}
[b_1^{-1},d_1^{-1}]=a_1,\ \  [a_1^{-1},d_1]=b_1,\ \  [b_2^{-1},d_2^{-1}]=a_2,\ \  [a_2^{-1},d_2]=b_2,\\ \nonumber
[d_1^{-1},b_2^{-1}]=c_1,\ \ [c_1^{-1},b_2]=d_1,\ \ [d^{-1}_2,b^{-1}_1]=c_2,\ \ [c_2^{-1},b_1]^m=d_2,\\ \nonumber
 [a_1,c_1]=1, \ \ [a_1,c_2]=1,\ \  [a_1,d_2]=1,\ \ [b_1,c_1]=1,\\ \nonumber
[a_2,c_1]=1, \ \ [a_2,c_2]=1,\ \  [a_2,d_1]=1,\ \ [b_2,c_2]=1,\\ \nonumber
[a_1,b_1][a_2,b_2]=1,\ \ \prod_{j=1}^n[c_j,d_j]=1,\\ \nonumber
[a_1^{-1},d_3^{-1}]=c_3, \ \ [a_2^{-1} ,c_3^{-1}] =d_3, \  \dots, \ 
[a_1^{-1},d_n^{-1}]=c_n, \ \ [a_2^{-1} ,c_n^{-1}] =d_n,\\ \nonumber
[b_1,c_3]=1,\ \  [b_2,d_3]=1,\ \dots, \
[b_1,c_n]=1,\ \ [b_2,d_n]=1.
\end{gather}

The surfaces $\Sigma_2\times\{{\rm pt}\}$ and $\{{\rm pt}\}\times \Sigma_n$ in $\Sigma_2\times\Sigma_n$ descend to surfaces in
$Y_n(m)$. They are symplectic submanifolds in $Y_n(1)$. We will denote their images by $\Sigma_2$ and $\Sigma_n$. 
Note that $[\Sigma_2]^2=[\Sigma_n]^2=0$ and $[\Sigma_2]\cdot[\Sigma_n]=1$. Let $\mu(\Sigma_2)$ and $\mu(\Sigma_n)$ 
denote the meridians of these surfaces in $Y_n(m)$. This construction easily generalizes to $\Sigma_3\times\Sigma_n$.
We will denote the resulting smooth manifold in this case as $Z_n(m)$. 

In our construction, we will also need the following result \cite{Sm}. 

\begin{prop} The homology class $2[\Sigma_{2} \times \{pt\}]$ in the four-manifold $\Sigma_{2} \times \mathbb{S}^2$ can be represented by connected symplectic genus $3$ surface.
\end{prop}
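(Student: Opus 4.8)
The plan is to exploit the fact that, by the symplectic adjunction formula, the genus of any connected embedded symplectic surface in the class $2[\Sigma_2\times\{pt\}]$ is \emph{forced}, so that the only real content is existence. Writing $S=[\Sigma_2\times\{pt\}]$ and $F=[\{pt\}\times\mathbb{S}^2]$, one has $S^2=F^2=0$, $S\cdot F=1$, and the canonical class of $\Sigma_2\times\mathbb{S}^2$ is $K=-2S+2F$ (from $K\cdot S=2g(\Sigma_2)-2=2$ and $K\cdot F=-2$). Hence for a connected symplectic $C$ with $[C]=2S$,
\[
2g(C)-2=(2S)^2+K\cdot(2S)=0+2(K\cdot S)=4,
\]
so $g(C)=3$ automatically. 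Note that the obvious candidates all fail: two parallel copies $\Sigma_2\times\{p_1\}$ and $\Sigma_2\times\{p_2\}$ are disjoint, tubing them produces a genus $4$ surface that violates adjunction (hence is non-symplectic), and any two $J$-holomorphic sections in the class $S$ are disjoint because $S\cdot S=0$. So a genuinely non-algebraic construction is needed.

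My construction realizes $C$ as an embedded \emph{double section}. First I would fix a connected unbranched double cover $\pi:\widetilde{\Sigma}\to\Sigma_2$ (these correspond to the nonzero classes in $H^1(\Sigma_2;\Z/2)$); since $\chi(\widetilde{\Sigma})=2\chi(\Sigma_2)=-4$, the cover $\widetilde{\Sigma}$ has genus $3$, with free deck involution $\tau$. Next I would produce a $\Z/2$-equivariant map $f:\widetilde{\Sigma}\to\mathbb{S}^2$, equivariant for $\tau$ on the source and the antipodal involution $A$ on $\mathbb{S}^2$. Such a map exists by elementary obstruction theory: building it cell by cell over a $\tau$-invariant CW structure, the only possible obstructions lie in $H^i(\Sigma_2;\pi_{i-1}(\mathbb{S}^2))$, which vanish for $i\le 2$ because $\mathbb{S}^2$ is $1$-connected. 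Setting $\Phi=(\pi,f):\widetilde{\Sigma}\to\Sigma_2\times\mathbb{S}^2$, injectivity follows because $\Phi(\widetilde{x})=\Phi(\tau\widetilde{x})$ would force $f(\widetilde{x})=A(f(\widetilde{x}))$, impossible as $A$ is fixed-point free; and $\Phi$ is an immersion since $\pi$ already is. Thus $\Phi$ is an embedding of a connected genus $3$ surface.

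It remains to check the homology class and symplecticity. Let $q_1,q_2$ be the two projections of $\Sigma_2\times\mathbb{S}^2$, so $q_1\circ\Phi=\pi$ has degree $2$ and $[\Phi(\widetilde{\Sigma})]\cdot F=2$; writing $[\Phi(\widetilde{\Sigma})]=2S+kF$ gives $k=[\Phi(\widetilde{\Sigma})]\cdot S=\deg f$. The equivariance $f\circ\tau=A\circ f$ yields $\deg f\cdot\deg\tau=\deg A\cdot\deg f$, and since $\tau$ is orientation preserving ($\deg\tau=1$) while $\deg A=-1$, this forces $\deg f=0$, so the class is exactly $2S$ as required. For symplecticity, equip $\Sigma_2\times\mathbb{S}^2$ with the product form $\omega_\varepsilon=q_1^*\omega_1+\varepsilon\,q_2^*\omega_2$, where $\omega_1,\omega_2$ are area forms; then $\Phi^*\omega_\varepsilon=\pi^*\omega_1+\varepsilon\,f^*\omega_2$. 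The main technical point — and the step I expect to be the only real obstacle — is that $f^*\omega_2$ is sign-indefinite, so positivity is not automatic; but $\pi^*\omega_1$ is a nowhere-vanishing area form on the compact surface $\widetilde{\Sigma}$ while $f^*\omega_2$ is bounded, so for $\varepsilon>0$ small enough $\Phi^*\omega_\varepsilon>0$ everywhere and $\Phi(\widetilde{\Sigma})$ is symplectic. Since all product forms on $\Sigma_2\times\mathbb{S}^2$ are deformation equivalent, this proves the Proposition, with genus equal to $3$ as computed above.
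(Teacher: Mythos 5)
Your construction is correct, and each step checks out: the adjunction computation ($K=-2S+2F$, forcing genus $3$); the existence of the $(\tau,A)$-equivariant map $f$ by obstruction theory (equivalently, a section of the $\mathbb{S}^2$-bundle $\widetilde{\Sigma}\times_{\Z/2}\mathbb{S}^2\to\Sigma_2$, unobstructed since $\pi_0(\mathbb{S}^2)=\pi_1(\mathbb{S}^2)=0$); injectivity of $\Phi$ from freeness of the antipodal map; the degree argument $\deg f=-\deg f=0$, which pins the class to exactly $2S$; and positivity of $\pi^*\omega_1+\varepsilon f^*\omega_2$ for small $\varepsilon$ by compactness. You should know, however, that the paper itself gives no proof: the Proposition is quoted from Smith \cite{Sm}, whose theorem treats arbitrary multiples $n[F]$ of the fiber class in an arbitrary surface bundle over a surface --- here $\Sigma_2\times\mathbb{S}^2\to\mathbb{S}^2$ is the trivial $\Sigma_2$-bundle, $n=2$, and Smith's representative is likewise (topologically) an unbranched $n$-fold cover of the fiber, of genus $n(g-1)+1=3$, consistent with your adjunction count. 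So your argument is a legitimate self-contained replacement for the citation, and it is parallel in spirit to Smith's (cover of the fiber, embedded by a graph-type map whose second component is kept small). The trade-off is generality: your key device, the \emph{free} antipodal involution on $\mathbb{S}^2$, is special to $n=2$ and to this base; for $n>2$ or for bases admitting no free $\Z/n$-action there is no global equivariant map to exploit, whereas Smith's construction works locally near a single fiber and therefore covers all cases, which is why the paper can simply cite it. One cosmetic point: your closing appeal to deformation equivalence of product forms is unnecessary and slightly off, since deformation equivalence does not transport symplectic submanifolds; what you actually proved --- that the surface is symplectic for the product form $q_1^*\omega_1+\varepsilon\, q_2^*\omega_2$ (equivalently, after rescaling, for a product form with large $\Sigma_2$-area) --- is exactly what the Proposition asserts and is all that the symplectic fiber sum in Section~\ref{sec:fake3} requires.
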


\section{Complex surfaces on Bogomolov-Miyaoka-Yau line}
\label{sec: model}

\subsection{Fake projective planes}
\label{sec: model1}

A fake projective plane is a smooth complex surface which is not the complex projective plane, but has the same Betti numbers as the complex projective plane. The first fake projective plane was constructed by David Mumford in 1979 using p-adic uniformization \cite{Mu}. He also showed that there could only be a finite number of such surfaces. Two more examples were found by Ishida and Kato \cite{IsKa} in 1998, and another by Keum \cite{Ke} in 2006. In their 2007 Inventiones paper \cite{PY} (see also Addendum \cite{PY1}), Gopal Prasad and Sai-Kee Yeung almost completely classified fake projective planes by proving that they fall into a small number of “classes”. Using the arithmeticity of the fundamental group of fake projective planes, and the formula for the covolume of principal arithmetic subgroups, they found twenty eight distinct classes of fake projective planes. For a very small number of classes, they left open the question of existence of fake projective planes in that class, but conjectured that there are none. Finally, Donald Cartwright and Tim Steger verified their conjecture and found all the fake projective planes, up to isomorphism, in each of the 28 classes \cite{CS}.

\begin{exm} In this example, we recall some properties of Mumford's fake projective plane $M$. We will use $M$ in Section~\ref{sec:fake}, but our construction works equally well with most of the fake projective planes . We refer the reader to \cite{Ye}, where a complete classification of all smooth surface of general type with Euler number $3$ are given. The Euler characteristic and the Betti numbers of $M$ are $e(M)=3$, $b_{1}(M)=0$ and $b_{2}(M)=1$. $M$ is a minimal complex surface of general type with $\sigma =1$, ${c_{1}}^2=3e = 9$ and $\chi_{h} = 1$. The intersection form of $M$ is odd, and isomorphic to $(1)$. The fundamental group $\Pi$ of $M$ is a torsion-free cocompact arithmetic subgroup of $PU(2, 1)$, thus $M$ is a ball quotient $B_{\mathbb{C}}^{2}/\Pi$. The canonical line bundle $K_{M}$ is divisible by $3$, i.e., there is a holomorphic line bundle $L$ on $M$ such that $K_{M} = 3L$. Also note the class of $L$ can be represented by a symplectic surface $H$ of self-intersection $1$ (see discussion in \cite{Ko}, pages 212-213). Using the adjunction formula, we compute the genus of the symplectic surface $H$: $g(H) = 1 + 1/2(H\cdot H + 3H \cdot H) = 3$. By symplectically blowing up $H$, we obtain a symplectic genus 3 surface with self-intersection 0 in $\bar{\Sigma}_3$ in $M\#\CPb$.
\end{exm}

\subsection{Complex surfaces of Cartwright and Steger}
\label{sec: model2}

The study of enumerating the set of all fake projective planes in the class $\mathcal{C}_{11}$ led Donald Cartwright and Tim Steger to discover a complex surface with irregularity $q=1$ and Euler characteristic $e=3$. Cartwright and Steger showed that a certain maximal arithmetic subgroup $\bar{\Gamma}$ of $PU(2; 1)$ contains a torsion-free subgroup $\Pi$ of index $864$ which has abelianization $\mathbb{Z}^2$. Such subgroup $\Pi$ is unique up to conjugation. Furthermore, for each ineger $n \geq 1$, $\Pi$ has a normal subgroup $\Pi_{n}$ of index $n$. Let $M_{n} = B^{2}(\mathbb{C})/\Pi_{n}$ denote the quotient of a complex hyperbolic space by a torsion free lattice $\Pi_{n}$ of $PU(2; 1)$. The Euler characteristic of $M_{n}$\/ is $e(M_{n})=ne(M_{1})=3n$. $M_{n}$\/ is a minimal complex surface of general type with $\sigma = n$, ${c_{1}}^2=3e = 9n$ and $\chi_{h} = n$. The intersection form of $M_{1}$ is odd, indefinite and isomorphic to $3(1) \oplus 2(-1)$. The Betti numbers of $M_{1}$ are: $1, 2, 5, 2, 1$.

For the convenience of the reader, let us first recall some details of their construction. Let $k = \mathbb{Q}(\sqrt{3})$ and $l = \mathbb{Q}(\zeta)$, where $\zeta  = e^{2{\pi}i/12}$ denote a primitive 12th roots of unity. Since $2\zeta - \zeta^{3} = \sqrt{3}$, $k$ is the subfield of $l$. Let $A$ denote the following matrix with entries in $k$:

\[
A =
\left[ {\begin{array}{ccc}
-1-\sqrt{3} & 1 & 0  \\
 1 & 1-\sqrt{3} & 0  \\
 0 & 0 & 1  \\
\end{array} } \right]
\]

Let $\bar{\Gamma} = \{ \xi \in M_{3,3}(\mathbb{Z}[\zeta]) : {\xi}^{*}A\xi = A \}$ modulo scalars. $\bar{\Gamma}$ is generated by the following four matrices:

\[
u =
\left[ {\begin{array}{ccc}
1 & 0 & 0  \\
-{\zeta}^{3} - {\zeta}^{2} + {\zeta} + 1 & {\zeta}^{3} & 0  \\
 0 & 0 & 1  \\
\end{array} } \right]
\]
,

\[
v =
\left[ {\begin{array}{ccc}
{\zeta}^{3} + 1 & {\zeta}^{3} - {\zeta}^{2} - {\zeta} + 1 & 0  \\
 {\zeta}^{2} + {\zeta} & -{\zeta}^{3} - 1 & 0  \\
 0 & 0 & 1  \\
\end{array} } \right]
\]
,

\[
j =
\left[ {\begin{array}{ccc}
{\zeta} & 0 & 0  \\
 0 & {\zeta} & 0  \\
 0 & 0 & 1  \\
\end{array} } \right]
\]
,

\[
b =
\left[ {\begin{array}{ccc}
{\zeta}^{3} + {\zeta}^{2} & -{\zeta}^{2} & {\zeta}^{2}-1  \\
 {\zeta}^{3} + 2{\zeta}^{2} + {\zeta} & -{\zeta} & {\zeta}^{3} + {\zeta}^{2} \\
 -{\zeta}^{3} - {\zeta}^{2} + {\zeta} + 1 & {\zeta}^{3} & -{\zeta}^{3} + {\zeta} + 1 \\
\end{array} } \right]
\]

According to Cartwright and Steger, $\pi_1(M_{1})$ is generated by $u$, $v$, $j$, and $b$ and the following relations hold in $\pi_1(M_{1})$:  

\begin{gather}\label{CS relations}
vubj = u,\ \  bj^{2} = ju,\ \  u^{2}vbu = j^{2}.\\ \nonumber
\end{gather}

\section{Construction of fake rational homology $5\CP\#5\CPb$} 
\label{sec:fake}

Let us fix a triple of integers $m \geq 1$, $p \geq 1$ and $q \geq 1$. Let $Y_1(1/p,m/q)$ denote smooth $4$-manifold obtained by performing the following 6 torus surgeries on $\Sigma_3\times \mathbb{T}^2$:

\begin{equation}\label{eq: Luttinger surgeries for Y_1(m)} 
(a_1' \times c', a_1', -1), \ \ (b_1' \times c'', b_1', -1),\\  \nonumber
(a_2' \times c', a_2', -1), \ \ (b_2' \times c'', b_2', -1),\\  \nonumber
(a_3' \times c', c', +1/p), \ \ (a_3'' \times d', d', +m/q).
\end{equation}

Here, $a_i,b_i$ ($i=1,2,3$) and $c,d$\/ denote the standard generators of $\pi_1(\Sigma_3)$ and $\pi_1(\mathbb{T}^2)$, respectively. Since all the torus surgeries above are Luttinger surgeries when $m = 1$, $Y_1(1/p,1/q)$ is a minimal symplectic 4-manifold. The fundamental group of $Y_1(1/p,m/q)$ is generated by $a_i,b_i$ ($i=1,2,3$) and $c,d$, and the following relations hold in $\pi_1(Y_1(1/p,m/q))$:

\begin{gather}\label{Luttinger relations for Y_1(m)}
[b_1^{-1},d^{-1}]=a_1,\ \  [a_1^{-1},d]=b_1,\ \
[b_2^{-1},d^{-1}]=a_2,\ \  [a_2^{-1},d]=b_2,\\ \nonumber
[d^{-1},b_3^{-1}]=c^p,\ \ {[c^{-1},b_3]}^{-m}=d^q,\\ \nonumber
[a_1,c]=1,\ \  [b_1,c]=1,\ \ [a_2,c]=1,\ \  [b_2,c]=1,\\ \nonumber
[a_3,c]=1,\ \  [a_3,d]=1,\\ \nonumber
[a_1,b_1][a_2,b_2][a_2,b_2]=1,\ \ [c,d]=1.
\end{gather}

Let $\Sigma_3 \subset Y_1(1/p,m/q)$ be a genus 3 surface that desend from the surface $\Sigma_3\times\{{\rm pt}\}$ in $\Sigma_3\times \mathbb{T}^2$.

Next, we take the normal connected sum 
\begin{equation*}
X_1(m, p, q, \psi)=Y_{1}(1/p,m/q)\#_{\psi}(M\#\CPb)
\end{equation*}

To perform our gluing, we can use any orientation reversing diffeomorphism $\psi:\partial(\nu\Sigma_{3})\rightarrow \partial(\nu\bar{\Sigma}_3)$ that restricts to orientation preserving diffeomorphism on parallel genus 3 surfaces and complex conjugation on the meridian circles. The homotopy type of $X_1(m, p, q, \psi)$ does depend on $p$, $q$, and $\psi$.    

Let us choose a base point $x$ of $\pi_1(Y_1(1/p,m/q))$ on $\partial(\nu\Sigma_3)$ such that $\pi_1(Y_1(1/p,m/q)\setminus\nu\Sigma_3,x)$ is normally generated by $a_i,b_i$ ($i=1,2,3$) and $c,d$.  
Notice that the symplectic genus three surface $\Sigma_3=\Sigma_3 \times \{ pt \}$ is disjoint from the neighborhoods of six lagrangian tori listed above. As a consequence of this, all the relations in (\ref{Luttinger relations for Y_1(m)}) continue to hold in $\pi_1(Y_1(1/p,m/q)\setminus\nu\Sigma_3)$ except the relation $[c,d]=1$. The commutator relation $[c,d]$ is no longer trivial, and it represents a meridian of $\Sigma_{3}$ in $\pi_1(Y_1(1/p,m/q)\setminus\nu\Sigma_3)$. 

Let us choose a diffemorphism $\psi$ that maps the generators of $\pi_1$ as follows:

\begin{equation}\label{eq:psi mapsto for X_1(m)}
a_i \mapsto \bar{a}_i^{\scparallel}, \ \ 
b_i \mapsto \bar{b}_i^{\scparallel}, \ \ 
i = 1,2,3.  
\end{equation}

The manifold $X_1(1, p, q, \psi)$ is symplectic (cf.\ \cite{gompf}). 

\begin{lem}\label{lem:1-2}
The set\/ $S_{5,5}=\{X_1(m, p, q, \psi)\mid m, p, q \geq 1, \psi \}$ consists of $4$-manifolds that all are a fake rational homology $5\CP\# 5\CPb$. Moreover, the family $S_{5,5}$ contains an infinite subsets consisting of pairwise non-diffeomorphic minimal symplectic and non-diffemorphic non-symplectic\/ $4$-manifolds.
\end{lem}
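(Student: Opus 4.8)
The plan is to establish the lemma in three stages: first compute the rational homology and fundamental group of $X_1(m,p,q,\psi)$, then show it is a fake $5\CP\#5\CPb$ (meaning it is not diffeomorphic to the standard connected sum), and finally extract the claimed infinite families. For the homology computation, I would note that $Y_1(1/p,m/q)$ has Euler characteristic $0$ (as it is built from $\Sigma_3\times\mathbb{T}^2$ by torus surgeries, which by the Lemma preserve $e$ and $\sigma$) and signature $0$, while $M\#\CPb$ has $e=4$ and $\sigma=0$. The normal connected sum along the genus $3$ surfaces $\Sigma_3$ and $\bar\Sigma_3$ satisfies $e(X_1)=e(Y_1)+e(M\#\CPb)-2e(\Sigma_3)=0+4+8=12$ and $\sigma(X_1)=\sigma(Y_1)+\sigma(M\#\CPb)=0$; since $5\CP\#5\CPb$ has $e=12$ and $\sigma=0$, these match. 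To confirm the rational homology is actually that of $5\CP\#5\CPb$, I would compute $b_1$ by a van Kampen and Mayer--Vietoris argument and verify $b_1(X_1)=0$, which forces $b_2=10$ and pins down the rational homology.

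The fundamental group computation is where the real work lies, and I expect it to be the main obstacle. By van Kampen applied to the decomposition $X_1=(Y_1\setminus\nu\Sigma_3)\cup(M\#\CPb\setminus\nu\bar\Sigma_3)$ glued via $\psi$, the group $\pi_1(X_1)$ is the quotient of the free product of $\pi_1(Y_1\setminus\nu\Sigma_3)$ and $\pi_1((M\#\CPb)\setminus\nu\bar\Sigma_3)$ amalgamated over $\pi_1(\partial\nu\Sigma_3)$, with the identifications dictated by $(\ref{eq:psi mapsto for X_1(m)})$. I would use the relations in $(\ref{Luttinger relations for Y_1(m)})$ together with the fact that the meridian $\mu(\Sigma_3)$ equals $[c,d]$ in $\pi_1(Y_1\setminus\nu\Sigma_3)$, and the gluing $\psi$ sending $a_i,b_i$ to the pushed-off generators $\bar a_i^{\scparallel},\bar b_i^{\scparallel}$ and complex-conjugating the meridian. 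Since $M$ is a fake projective plane with finite fundamental group $\Pi$ and $b_1=0$, the surface $\bar\Sigma_3$ carries enough of $\Pi$ that identifying it with the boundary of $Y_1\setminus\nu\Sigma_3$ kills the commuting loops $c,d$ and then cascades through the relations $[b_i^{-1},d^{-1}]=a_i$, etc., to collapse $\pi_1(X_1)$ to a finite group (in fact trivial, giving $b_1=0$). The delicate point is tracking exactly which generators survive after amalgamation and verifying that the meridional relation from the symplectic-sum construction kills the remaining generators; this bookkeeping is the crux and must be done carefully relation-by-relation.

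With the rational homology of $5\CP\#5\CPb$ and trivial $\pi_1$ in hand, the smooth-structure claims follow from standard gauge-theoretic machinery. For symplecticity, when $m=1$ all six surgeries are Luttinger surgeries, so $Y_1(1/p,1/q)$ is symplectic, and the normal connected sum of symplectic manifolds along symplectic surfaces of equal genus and opposite self-intersection is symplectic by Gompf, giving $X_1(1,p,q,\psi)$ symplectic; minimality follows since the pieces are minimal and the fiber sum of minimal symplectic manifolds along surfaces of positive genus is minimal (by Usher's theorem). To see these are genuinely fake, I would compute Seiberg--Witten invariants: the standard $5\CP\#5\CPb$ admits a positive scalar curvature metric and hence has vanishing SW invariants, whereas the symplectic members of $S_{5,5}$ have nontrivial SW invariant by Taubes, so they cannot be diffeomorphic to the standard manifold. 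Finally, to produce the infinite pairwise non-diffeomorphic families, I would vary the surgery parameter $m$ (and the multiplicities $p,q$): distinct values of $m$ yield distinct sets of SW basic classes or distinct values of the SW invariants, so that among $\{X_1(m,p,q,\psi)\}$ infinitely many are mutually non-diffeomorphic; the non-symplectic members arise from the $m\geq 2$ surgeries, which are non-symplectic torus surgeries but still alter the SW invariants to obstruct diffeomorphism with the standard manifold and with each other.
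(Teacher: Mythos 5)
Your outline of the characteristic-number computation and the van Kampen setup matches the paper, but the core of your fundamental group argument rests on a false premise. The fundamental group $\Pi$ of a fake projective plane is \emph{not} finite: as stated in Section~\ref{sec: model1}, $\Pi$ is a torsion-free cocompact lattice in $PU(2,1)$ and $M$ is the aspherical ball quotient $B_{\mathbb{C}}^{2}/\Pi$, so $\Pi$ is infinite (only $b_1(M)=0$ holds, i.e.\ $H_1(M)$ is torsion). Consequently your claimed "cascade" collapsing $\pi_1(X_1(m,p,q,\psi))$ to a finite (indeed trivial) group does not work and is in fact false: the paper itself points out that taking $q=1$ and varying $p$ introduces $p$-torsion into $H_1(X_1(m,p,q,\psi);\Z)$, and the introduction treats simple connectivity of these manifolds as an open problem. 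What the lemma actually needs is only $b_1=0$, and the mechanism is different from yours: the meridian $\mu(\bar{\Sigma}_3)$ is already nullhomotopic in $\pi_1(M\#\CPb\setminus\nu\bar{\Sigma}_3)$ because the exceptional sphere $E$ meets $\bar{\Sigma}_3$ transversally in one point (a punctured $E$ gives the nullhomotopy); hence $[c,d]=1$ in $\pi_1(X_1)$, the abelianized Luttinger relations in (\ref{Luttinger relations for Y_1(m)}) kill $a_i,b_i$ and make $c,d$ torsion, and $b_1(M\#\CPb)=0$ then gives $b_1(X_1)=0$ by Mayer--Vietoris. Relatedly, your route to "fakeness" via comparing with the simply connected manifold $5\CP\#5\CPb$ (PSC, vanishing SW) presumes a homeomorphism statement that is not available; here "fake rational homology" only asserts equality of rational homology.

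Two further steps need repair. First, minimality: you assert "the pieces are minimal," but $M\#\CPb$ is by construction a blow-up, hence not minimal. Usher's theorem still applies, but the correct reason is that the unique $-1$-sphere $E$ intersects the gluing surface $\bar{\Sigma}_3$, so there is no $-1$-sphere in $M\#\CPb\setminus\nu\bar{\Sigma}_3$. Second, non-symplecticness for $m\geq 2$: performing a non-Luttinger ($m$-)torus surgery does not by itself preclude the result from carrying \emph{some} symplectic structure, so this part of your argument is a non sequitur. The paper's argument is that, viewing $X_1(m,p,q,\psi)$ as torus surgeries on the symplectic manifold $(\Sigma_3\times\mathbb{T}^2)\#_{\psi}(M\#\CPb)$ and computing as in \cite{ABP, FPS}, the values of the Seiberg--Witten invariants grow without bound as $m\to\infty$; since Taubes's theorem forces the SW invariant of the canonical class of any symplectic $4$-manifold (with $b^+>1$) to be $\pm 1$, the manifolds with large $m$ admit no symplectic structure at all. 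This Taubes obstruction is the missing idea; your proposal uses Taubes only to give the symplectic members nontrivial SW, which does not address this point.
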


\begin{proof}
We have 
\begin{eqnarray*}
e(X_1(m, p, q, \psi)) &=& e(Y_{1}(1/p,m/q)) + e(M\#\CPb) -2 e(\Sigma_{3}) = 0 + 4 + 8 = 12,\\
\sigma(X_1(m, p, q, \psi)) &=& \sigma (Y_{1}(1/p,m/q)) + \sigma(M\#\CPb) = 0 + 0 = 0.
\end{eqnarray*}

It follows from Seifert-Van Kampen theorem that $\pi_1(X_1(m, p, q, \psi))$ is a quotient of the following group:  

\begin{equation}\label{pi_1(X_1(m))}
\frac{\pi_1(Y_{1}(1/p,m/q)\setminus\nu\Sigma_{3})\ast
\pi_1(M\#\CPb\setminus\nu\bar{\Sigma}_3)}{\langle a_1=\alpha_1,\,
b_1=\alpha_2,\, 
a_2=\alpha_3,\, 
b_2=\alpha_4,\, 
a_3=\alpha_5,\, 
b_3=\alpha_6,\,
\mu(\Sigma_{3})=\mu(\bar{\Sigma}_3)^{-1} \rangle},
\end{equation}

Since any meridian of $\bar{\Sigma}_3$ is trivial in $\pi_1(M\#\CPb\setminus\nu\bar{\Sigma}_3)$, the generator $\mu(\Sigma_{3}) = [c,d]$ coming from (\ref{pi_1(X_1(m))}) is trivial as well. Using the above identification of the fundamental group $\pi_1(X_1(m, p, q, \psi))$, or applying Mayer-Vietoris sequence for the triple $(Y_{1}(1/p,m/q)\setminus\nu\Sigma_{3}, M\#\CPb\setminus\nu\bar{\Sigma}_3, {\Sigma}_3 \times S^1)$, it is straightforward to check that $b_{1}(X_1(m, p, q, \psi)) = 0$. Also, by setting $q = 1$ and variying $p$, introduces $p$ torsion into $H_{1}(X_1(m, p, q, \psi); \mathbb{Z})$.  

To prove minimality for symplectic case (i.e., when $m=1$), we first observe that $Y_1(1/p,1/q)$ is minimal and that the only $-1$ sphere in $M\#\CPb$ is the exceptional sphere $E$\/ of the blow-up.  Since $E$\/ intersects $\bar{\Sigma}_3$ once in $M\#\CPb$, there is no $-1$ sphere in $M\#\CPb \setminus \bar{\Sigma}_3$. It follows from Usher's theorem in \cite{usher} that $X_1(1,p,q,\psi)$ is symplectically minimal.  
Moreover, $-1$ sphere in $M\#\CPb$ and square zero torus in $(Y_{1}(1/p,m/q)$ give rise to $-1$ torus in $X_1(m, p, q, \psi)$. Thus, the intersection form of $X_1(m, p, q, \psi)$ is odd. In symplectic case (i.e., when $m = 1$), $X_1(m, p, q, \psi)$ being odd manifold also follows form the canonical class formula of M. Hamilton (see \cite{Ha}, page 4) for a symplectic fiber sum: $K_{X} = K_{X_{1}} + K_{X_{2}} + \Sigma_{g} + \bar{\Sigma}_{g} + 2(2-2g)B_{X} + \sum_{i=1}^d t_{i}R_{i}$. We refer the reader to \cite{Ha} for unexplained notations.     
  
To show an infinitely many among $X_1(m,p,q,\psi)$'s are pairwise non-diffeomorphic and non-symplectic, we view $X_1(m, p, q, \psi)$ as the result of $5$ Luttinger surgeries and a single $m$\/ torus surgery on $X = (\Sigma_3 \times \mathbb{T}^2)\#_{\psi}(M\#\CPb)$. By Usher's theorem in \cite{usher} that $X$ is symplectically minimal. Next, we compute the Seiberg-Witten invariants of $X_1(m, p, q, \psi)$ and check that infinitely many of them are distinct by applying the same argument as in \cite{ABP, FPS}. Furthemore, we observe that the values of the Seiberg-Witten invariants of $X_1(m, p, q, \psi)$ grow arbitrarily large as $m\rightarrow\infty$. Since the value of the Seiberg-Witten invariant on the canonical class of a symplectic $4$-manifold is $\pm 1$ by Taubes's theorem \cite{taubes}), $X_1(m, p, q, \psi)$ will not be symplectic if $m$\/ is large enough. 

\end{proof}

\begin{rmk} Similar examples can be constructed starting from Cartwright-Steger surfaces $M_{n}$. We first consider the symplectic surfaces that represents the class of canonical line bundle $K_{M_{n}}$ or line bundles with even smaller self-intersection as in \ref{sec: model1}. By blowing up, we reduce the self-intersection of such surface to zero. Next, we take the symplectic connected sum of the resulting manifold with the product manifolds $\Sigma_{g} \times \Sigma_{h}$, and apply Luttinger surgeries along the lagrangian tori. The details of these examples will be discussed in a future paper. 

\end{rmk}

\section{Construction of fake rational homology $(2n-1)\CP\# (2n-1)\CPb$ for $n\geq 4$}
\label{sec:faken}

Let $Y_1(1/p,m/q)$ be the manifold from Section~\ref{sec:fake} with a genus $3$ surface $\Sigma_3$ with self-intersection zero sitting inside it. Let $X_n(m, p, q, \psi)$ = $Y_1(1/p, m/q)\#_{\psi}(M\#\CPb)\#_{id} \cdots \#_{id}(M\#\CPb)$, where $\psi:\partial(\nu\Sigma_3)\to \partial(\nu\bar{\Sigma}_3)$ and there are $n$ copies of $M\#\CPb$. We choose $\psi_{\ast}$ which maps the generators of $\pi_1$ as follows:
\begin{equation}\label{eq:phi mapsto}
a_i \mapsto \bar{a}_i^{\scparallel}, \ \ 
b_i \mapsto \bar{b}_i^{\scparallel}, \ \ 
i = 1,2,3.  
\end{equation}

$X_n(m,p,q,\psi)$ is symplectic when $m=1$. 

\begin{lem}\label{lem:(2n-1)-2n}
For each integer\/ $n\geq 4$, the set\/ $S_{2n-1,2n-1}=\{X_n(m, p, q, \psi)\mid m, p, q \geq 1, \psi \}$ consists of $4$-manifolds that all are fake rational homology $(2n-1)\CP\# (2n-1)\CPb$. Moreover, the family $S_{2n-1,2n-1}$ contains an infinite subsets consisting of pairwise non-diffeomorphic minimal symplectic and non-diffemorphic non-symplectic\/ $4$-manifolds.
\end{lem}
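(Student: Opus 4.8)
\emph{Proof proposal.} The plan is to reduce the statement to the one-copy computation of Lemma~\ref{lem:1-2}, carried out simultaneously over the $n$ summands $M\#\CPb$. First I would record the characteristic numbers: signature is additive under the normal connected sum by Novikov additivity, and the Euler characteristic obeys the gluing relation $e(A\#_{\Sigma_3}B)=e(A)+e(B)-2e(\Sigma_3)$ already used in Lemma~\ref{lem:1-2}. Starting from $e(Y_1(1/p,m/q))=\sigma(Y_1(1/p,m/q))=0$ and using $e(\Sigma_3)=-4$, $e(M\#\CPb)=4$, $\sigma(M\#\CPb)=0$, I would iterate these two formulas across the $n$ gluings and check that the resulting $e(X_n(m,p,q,\psi))$ and $\sigma(X_n(m,p,q,\psi))=0$ agree with those of $(2n-1)\CP\#(2n-1)\CPb$.

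Next I would identify the fundamental group by iterating Seifert--van Kampen, exactly as in (\ref{pi_1(X_1(m))}): $\pi_1(X_n(m,p,q,\psi))$ is a quotient of the free product of $\pi_1(Y_1(1/p,m/q)\setminus\nu\Sigma_3)$ with the $n$ groups $\pi_1((M\#\CPb)\setminus\nu\bar{\Sigma}_3)$, amalgamated over the genus-$3$ surface subgroups via $\psi_*$ on the first factor and the identity on the others. The crucial homological input, inherited from the one-copy case, is that $H_1((M\#\CPb)\setminus\nu\bar{\Sigma}_3;\mathbb{Q})=0$: the fake projective plane has $b_1=0$, so its surface generators are rationally trivial, while the exceptional sphere meets $\bar{\Sigma}_3$ once and hence kills the meridian. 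Feeding this into the Mayer--Vietoris sequence over the triple, together with the Luttinger relations (\ref{Luttinger relations for Y_1(m)}) that already trivialize $a_1,b_1,a_2,b_2$ and make $c,d$ torsion, forces $b_1(X_n(m,p,q,\psi))=0$. Combined with the first step, this determines the rational homology and the rational intersection form (indefinite of signature zero), which proves the first assertion; setting $q=1$ and varying $p$ moreover injects $p$-torsion into $H_1(\,\cdot\,;\mathbb{Z})$, distinguishing infinitely many homeomorphism types.

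For the symplectic case $m=1$ I would establish minimality as in Lemma~\ref{lem:1-2}: $Y_1(1/p,1/q)$ is minimal, the unique $-1$-sphere in each copy of $M\#\CPb$ is its exceptional sphere $E_i$, and $E_i\cdot\bar{\Sigma}_3=1$ leaves no $-1$-sphere in $(M\#\CPb)\setminus\nu\bar{\Sigma}_3$, so Usher's theorem \cite{usher} applies; the same classes $E_i$ give rise to $-1$-tori, so the integral intersection form is odd. To separate infinitely many diffeomorphism types I would compute the Seiberg--Witten invariants by combining the product formula for fiber sums with the surgery formula for the single $m$-torus surgery, following \cite{ABP, FPS}: these invariants take infinitely many distinct values and grow without bound as $m\to\infty$, so by Taubes's theorem \cite{taubes} the manifolds cannot be symplectic once $m$ is large, yielding the infinite non-symplectic subfamily alongside the infinite symplectic one.

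The step I expect to be the main obstacle is the Seiberg--Witten computation through the $n$ iterated fiber sums: the building blocks $M\#\CPb$ are general-type ball quotients with $b^{+}=1$ carrying their own basic classes, so arranging the product and surgery formulas so that distinctness and unbounded growth in $m$ can be read off uniformly in $n$ is the delicate point. By contrast, the homological bookkeeping of the second step is a routine iteration of Lemma~\ref{lem:1-2}, once one verifies $H_1((M\#\CPb)\setminus\nu\bar{\Sigma}_3;\mathbb{Q})=0$ for each copy.
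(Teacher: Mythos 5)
Your proposal follows the same route as the paper's proof: Novikov additivity and the fiber-sum relation for $e$ and $\sigma$, then $b_1=0$ via Seifert--van Kampen and the triviality of the meridians of $\bar{\Sigma}_3$, minimality via Usher's theorem \cite{usher}, and the infinite symplectic/non-symplectic families via the Seiberg--Witten argument of \cite{ABP, FPS} together with Taubes's theorem \cite{taubes}; the paper itself compresses all of this into ``applying the same arguments as in the proof of Lemma~\ref{lem:1-2}.'' So in outline you and the paper agree. The genuine problem is in the step you call routine bookkeeping. If you actually iterate $e(A\#_{\Sigma_3}B)=e(A)+e(B)-2e(\Sigma_3)$ over the $n$ gluings, each copy of $M\#\CPb$ contributes $e(M\#\CPb)-2e(\Sigma_3)=4+8=12$, so the iterated fiber sum has $e(X_n(m,p,q,\psi))=12n$, whereas $(2n-1)\CP\#(2n-1)\CPb$ has $e=4n$. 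The agreement you propose to ``check'' therefore fails for every $n\geq 2$: granting $b_1=0$ and $\sigma=0$, what this construction produces is a rational homology $(6n-1)\CP\#(6n-1)\CPb$. Be aware that the paper's own computation, $e(X_n)=e(Y_1(1/p,m/q))+n\,e(M\#\CPb)-2e(\Sigma_3)=4n+8$, subtracts the correction term only once---which is not the Euler characteristic of an iterated fiber sum---and $4n+8$ still differs from $4n$. Your (correct) iteration thus exposes an indexing inconsistency rather than creates one; but it means your argument cannot close as written, and the lemma as phrased requires re-indexing the family or modifying the building blocks before either your proof or the paper's goes through.

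A second, smaller gap: you claim each exceptional sphere $E_i$ gives rise to a $-1$-torus, which would prove the intersection form odd for every $n$. In the iterated sum the interior exceptional spheres are punctured by two gluing surfaces, so what one actually obtains---and what the paper uses---is a single torus of square $-n$, built by chaining the punctured $E_i$'s with a punctured square-zero torus coming from $Y_1(1/p,m/q)$; this yields oddness only for odd $n$, exactly the qualification appearing in Theorem~\ref{thm:main}. Finally, note the inversion of your difficulty assessment: the Seiberg--Witten step you flagged as the main obstacle is handled in the paper purely by citation, identically to Lemma~\ref{lem:1-2}, while the homological step you deemed routine is where the argument actually breaks.
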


\begin{proof}
We can easily compute that
\begin{eqnarray*}
e(X_n(m, p, q, \psi)) &=& e(Y_{1}(1/p,m/q)) + ne(M\#\CPb) -2 e(\Sigma_{3}) = 0 + 4n + 8 = 4n+8,\\
\sigma(X_n(m,p,q,\psi)) &=& \sigma (Y_{1}(1/p,m/q)) + n\sigma(M\#\CPb) = 0 + 0 = 0.
\end{eqnarray*}

Since the exceptional sphere $E$\/ intersects $\bar{\Sigma}_3$ once in $M\#\CPb$, it follows from Usher's theorem in \cite{usher} that both $n(M\#\CPb)$ and $X_1(m)$ are symplectically minimal.  
Furthermore, sewing $-n$ sphere in $n(M\#\CPb)$ and square zero torus in $(Y_{1}(1/p,m/q)$ gives $-n$ torus in $X_n(m, p, q, \psi)$. Thus, the intersection form of  $X_n(m, p, q, \psi)$ is odd when $n$ is odd.
Applying the same arguments as in the proof of Lemma~\ref{lem:1-2}, we prove the rest of the statements. 
\end{proof}

\begin{rmk}
By taking the normal connected sum $Z_n(m)\#_{\psi}(M\#\CPb)$, we can also obtain an infinite family with the same rational homology $(2n-1)\CP\# (2n-1) \CPb$. 
\end{rmk}

\section{New construction of fake rational homology $3(\mathbb{S}^2\times \mathbb{S}^2)$}
\label{sec:fake3}

To construct a fake rational homology $3(\mathbb{S}^2\times \mathbb{S}^2)$, we proceed as follows. First recall from Section~\ref{sec: model2} that there is a connected genus $3$ symplectic surface $\widetilde{\Sigma}_3$ of self-intersection $0$ in $\Sigma_{2} \times \mathbb{S}^{2}$ that represents the homology class $2[\Sigma_{2} \times \{pt\}]$. Let $M\#\CPb$ be the symplectic $4$-manifold from Section~\ref{sec: model2} with a genus $3$ surface $\bar{\Sigma_3}$ with a self-intersection zero. 

We take the normal connected sum 

\begin{equation*}
X = (\Sigma_{2}\times \mathbb{S}^2)\#_{\psi}(M\#\CPb)
\end{equation*}

We choose $\phi_{\ast}$ which maps the generators of $\pi_1$ as follows:

\begin{equation}\label{eq:phi mapsto}
\tilde{a_i} \mapsto \bar{a}_i^{\scparallel}, \ \ 
\tilde{b_i} \mapsto \bar{b}_i^{\scparallel}, \ \ 
i = 1,2,3
\end{equation}

We have 
\begin{eqnarray*}
e(X) &=& e(\Sigma_{2}\times \mathbb{S}^2) + e(M\#\CPb) -2 e(\Sigma_{3}) = -4 + 4 + 8 = 8,\\
\sigma(X) &=& \sigma (\Sigma_{2}\times \mathbb{S}^2) + \sigma(M\#\CPb) = 0 + 0 = 0.
\end{eqnarray*}

It is easy to see that $X$ has the rational homology of $3(\mathbb{S}^2\times \mathbb{S}^2)$, and it follows form the canonical class formula that $X$ is spin. The details can be filled in as in the previous proofs and are left to the curious reader as an exercise.

\section*{Acknowledgments}  The author greatly benefited from illuminating discussions with Professor Sai-Kee Yeung during his visits to Purdue University in 2010, March 2012, and from various email exchanges with him. The author very grateful to him for drawing our attention to his work on fake projective planes \cite{PY, Ye} and the surfaces of Cartwright-Steger \cite{CS} during these visits. The author also thanks B. D. Park for helpful discussions. The author is partially supported by NSF grants FRG-0244663 and DMS-1005741.

\end{document}